\newtheorem{corollary}{Corollary}
\newtheorem{proposition}{Proposition}
\newtheorem{remark}{Remark}
\newtheorem{theorem}{Theorem}
\newtheorem{example}{Example}
\numberwithin{equation}{section}
\newcommand{\be}{\begin{equation}}
	\newcommand{\ee}{\end{equation}}
\newcommand{\ben}{\begin{enumerate}}
	\newcommand{\een}{\end{enumerate}}
\newcommand{\beq}{\begin{eqnarray}}
	\newcommand{\eeq}{\end{eqnarray}}
\newcommand{\beqn}{\begin{eqnarray*}}
	\newcommand{\eeqn}{\end{eqnarray*}}
\begin{document}
	\title[On cylindrical symmetric projectively flat]{On cylindrical symmetric projectively flat Finsler metrics}
	\author{Newton Sol\'orzano and V\'ictor Le\'on}
	
\address{N. Sol\'orzano. ILACVN - CICN, Universidade Federal da Integração Latino-Americana, Parque tecnológico de Itaipu, Foz do Iguaçu-PR, 85867-970 - Brazil}
	\email{nmayer159@gmail.com}

\address{V. Le\'on. ILACVN - CICN, Universidade Federal da Integração Latino-Americana, Parque tecnológico de Itaipu, Foz do Iguaçu-PR, 85867-970 - Brazil}
\email{victor.leon@unila.edu.br}

	

	\begin{abstract}
We study the cylindrical symmetric Finsler metrics. We obtain the system of differential equations of such metrics which are  projectively flat. We give a family of solutions of this system. Examples are included.
	\end{abstract}
	
	\keywords{Finsler metric, Cylindrically symmetric, Projectively flat, Warped metric.}   
	\subjclass[2020]{53B40, 53C60}
	\date{\today}

	\maketitle
\section{Introduction and Main Results}

A Finsler metric $ F(x,y) $ on an open subset $ U\subset \mathbb{R}^n $ is called {\em projectively flat} if all geodesics are straight lines in $ U. $ A Finsler metric $ F $ on a manifold $ M $ is called {\em locally projectively flat} if at any point, there is a locally coordinate system $ (x^i) $ where $ F $ is projectively flat. Hilbert’s Fourth Problem, which studies locally projectively flat metrics, is an important problem in Finsler geometry and projectively flat Finsler metrics on a convex domain in $ \mathbb{R}^n $ are regular solution to Hilbert’s Fourth Problem. 

G. Hamel \cite{Hamel1903} obtained conditions  which characterize projectively flat Finsler metric $ F=F(x,y) $ on an open subset $ U\subset \mathbb{R}^n. $ That is
\[F_{x^ky^l}y^k-F_{x^l}=0.\]




A Randers metric $ F=\alpha +\beta  $ is locally projectively flat if, and only if, $ \alpha $ is locally projectively flat and $ \beta $ is closed \cite{Bacso1997}. 
In \cite{HM2}, the authors founded the PDE which characterizes spherically symmetric Finsler metric and projectively flat and given an example using the Gauss error function.
In \cite{Shen2009}, the author studied $ (\alpha,\beta)$-metrics, and found a necessary and sufficient condition for such metrics to be projectively flat in dimension $ n\geq 3. $
In \cite{Liu2021}, the authors studied  locally projectively flat warped Finsler metric of constant flag curvature and obtained new examples. In \cite{Shen2016}, the authors studied the general $ (\alpha,\beta)-$metrics, and they classify those locally projectively flat when $ \alpha  $ is projectively flat. New examples were founded.

\

The following known Finsler metrics:
\begin{enumerate} 
	\item the known Shen’s fish tank metric on $M= I\times\mathbb{B}^2 \subset  \mathbb{R}^3 $ are of the form:
\begin{equation*}
F=\frac{\sqrt{(-x^2y^1+x^1y^2)^2+((y^0)^2+(y^1)^2+(y^2)^2)(1-(x^1)^2-(x^2)^2)}}{1-(x^1)^2-(x^2)^2} - \frac{x^2y^1-x^1y^2}{1-(x^1)^2-(x^2)^2}, 
\end{equation*}
where $ x=(x^0,x^1,x^2)\in \mathbb{R}\times\mathbb{B}^2  $ and $ y=(y^0,y^1,y^2)\in T_xM,$

\item the particular Randers metric constructed by Z. Shen in \cite{Shen2008}:
\[F=\frac{\sqrt{(1-\vert x\vert^4)\vert y \vert^2 + (y^0\vert x\vert^2-2x^0\langle x,y\rangle)^2} }{1-\vert x\vert^4} + \frac{y^0\vert x\vert^2-2x^0\langle x,y\rangle}{1-\vert x\vert^4},\]
where $ x=(x^0,x^1,...,x^n)\in M=I\times\mathbb{R}^n, y=(y^0,y^1,...,y^n)\in T_xM,$

\item the spherically symmetric Finsler metric \cite{HM1,Z}: \[ F=\vert y\vert \phi\left(\vert x\vert, \frac{ \langle x,y\rangle }{\vert y \vert}\right), \]
where $ x\in M=\mathbb{R}^{n+1}, y\in T_xM, $
\item the warped metrics \cite{Zhao2018,Kozma2001,Liu2019,marcal2020} defined on $ I\times \mathbb{R}^n $ of the form \begin{align*}
	F&=\vert \overline{y}\vert\phi\left(x^0,\frac{y^0}{\vert\overline{y}\vert}\right),\\
	F&=\vert \overline{y}\vert \phi\left(\frac{y^0}{\vert\overline{y}\vert},\vert \overline{x}\vert\right),
	\end{align*}
where 
 $ x=(x^0,\overline{x})=(x^0,x^1...,x^n)\in M=I\times \mathbb{R}^n, y=(y^0,\overline{y})=(y^0,y^1,...,y^n)\in T_xM,$
\end{enumerate}

are important and very studied Finsler metrics. We observed that those above metrics satisfy \begin{align}\label{cond01}	F((x^0,{O}\overline{x}),(y^0,{O}\overline{y}))=F((x^0,\overline{x}),(y^0,\overline{y})),
\end{align} for every $ O\in O(n). $ A Finsler metric $ F $ is called  {\em cylindrically symmetric} if $ F $ satisfies \eqref{cond01} for all $ O\in O(n). $ 
	
In \cite{Solorzano2022} (see Theorem \ref{teoSolor2022} in section 2) the author showed that every cylindrically symmetric Finsler metric can be written as 
\[F(x,y)=\vert \overline{y}\vert{\phi\left(x^0,\frac{y^0}{\vert \overline{y}\vert},\vert \overline{x}\vert,\frac{\langle\overline{x},\overline{y}\rangle}{\vert\overline{y}\vert}\right)},\]
where $|\cdot|$ and $\langle \cdot,\cdot\rangle$ are, respectively, the 
standard Euclidean norm and inner product on $\mathbb{R}^n$. 


In this paper, we give necessary and sufficient conditions for $ F=\vert\overline{y}\vert\phi $ to be a Finsler metric. The proof is inspired by the proof given in \cite{schernShen2005} for $ (\alpha,\beta)-$metrics.

\begin{theorem}\label{theotobeFinsler}
	Let $F=\vert\overline{y}\vert{\phi(x^0,z,r,s)}$ be a Finsler metric defined on $  M =I\times\mathbb{B}^n(\rho) $, where $ z=\frac{y^0}{\vert\overline{y}\vert}, $ $r=\vert\overline{x}\vert$, $ s=\frac{\langle\overline{x},\overline{y}\rangle}{\vert\overline{y}\vert} $ and  $TM $  with coordinates 
	\begin{equation}\label{coordxy}
		x=(x^0, \overline{x}),\;\overline{x}=(x^1,\ldots,x^n),\;y=(y^0, \overline{y})\;\mbox{ and }\overline{y}=(y^1,\ldots,y^n).
	\end{equation} 	
	 Then, $  F$ is a Finsler metric if, and only if, the positive function $ \phi $ satisfies  $ \Lambda>0 $ for $ n=2 $  with additional inequality, $ \Omega>0 $ for $ n\geq 3. $
\end{theorem}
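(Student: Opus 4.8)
The form of $F$ already guarantees two of the three defining properties of a Finsler metric. Since $z=y^0/|\overline y|$ and $s=\langle\overline x,\overline y\rangle/|\overline y|$ are homogeneous of degree zero in $y$ while $|\overline y|$ is homogeneous of degree one, $F$ is positively homogeneous of degree one; smoothness away from $\overline y=0$ follows from the smoothness of $\phi$. Hence the entire content of the statement is the strong convexity, i.e. the positive-definiteness of the fundamental tensor
\[
g_{ij}=\tfrac12\,(F^2)_{y^iy^j}=F\,F_{y^iy^j}+F_{y^i}F_{y^j}.
\]
The plan is to compute $g_{ij}$ explicitly and to reduce its positivity to algebraic inequalities on $\phi$ and its $z$- and $s$-derivatives, following the scheme used for $(\alpha,\beta)$-metrics in \cite{schernShen2005}.

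First I would record the elementary derivatives. Writing $u=|\overline y|$, one has $u_{y^0}=0$ and $u_{y^a}=y^a/u$, and then
\[
z_{y^0}=\tfrac1u,\quad z_{y^a}=-\tfrac{y^0y^a}{u^3},\qquad
s_{y^0}=0,\quad s_{y^a}=\tfrac{x^a}{u}-\tfrac{s\,y^a}{u^2},
\]
for $a=1,\dots,n$. Feeding these into the chain rule, $F_{y^i}$, $F_{y^iy^j}$, and therefore $g_{ij}$, become explicit combinations of the covector $dy^0$, the radial covector $\overline y/u$, the covector $\overline x$, and the projection $\delta_{ab}-y_ay_b/u^2$ onto the orthogonal complement of $\overline y$ inside $\mathbb R^n$. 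The coefficients are polynomial expressions in $\phi,\phi_z,\phi_s,\phi_{zz},\phi_{zs},\phi_{ss}$ and in $z,s,r$.

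The key structural observation is that $F$ depends on $\overline y$ only through the two scalars $u$ and $s$, that is, only through the plane $\Pi=\mathrm{span}\{\overline x,\overline y\}$. Exploiting the $O(n)$-invariance \eqref{cond01}, I would choose an orthonormal frame of $\mathbb R^n$ adapted to the decomposition $\mathbb R^n=\Pi\oplus\Pi^{\perp}$. In such a frame $g_{ij}$ is block diagonal: a core block of size three, spanned by the $y^0$-direction and the two in-plane directions (along $\overline y$ and along the component of $\overline x$ orthogonal to $\overline y$), together with $(n-2)$ identical one-dimensional blocks coming from $\Pi^{\perp}$, each equal to a single common scalar. Positive-definiteness of $g_{ij}$ is then equivalent to positivity of the core block, encoded by the determinant-type quantity $\Lambda$, together with positivity of the repeated eigenvalue $\Omega$ attached to the $\Pi^{\perp}$ directions. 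Since $\Pi^{\perp}$ is trivial exactly when $n=2$, the condition $\Omega>0$ is vacuous there and becomes an additional constraint only for $n\ge3$; by Sylvester's criterion, or equivalently by factoring $\det(g_{ij})$ as a positive multiple of $\Lambda\,\Omega^{\,n-2}$, this yields the stated dichotomy.

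The main obstacle is the organized computation and factorization: after substituting the chain-rule data, one must collect the many rank-one contributions into the clean block form above and verify that the perpendicular directions genuinely decouple with a single shared eigenvalue $\Omega$, while the core determinant factors off as $\Lambda$ up to an explicit positive prefactor (a power of $\phi$ and of $u$). Care is also needed at the degenerate loci where $\overline x$ and $\overline y$ are parallel, so that $s=\pm r$ and the two in-plane directions collapse; there the inequalities are checked by continuity. Identifying the precise polynomials $\Lambda$ and $\Omega$ in $\phi,\phi_z,\phi_s,\phi_{zz},\phi_{zs},\phi_{ss}$ then completes the proof.
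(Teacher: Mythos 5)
Your structural analysis is correct and in fact reproduces the paper's key computation: in a frame adapted to $\Pi=\mathrm{span}\{\overline{x},\overline{y}\}$ the fundamental tensor does split into a $3\times 3$ core block plus $(n-2)$ copies of the scalar $\phi\Omega$, and the determinant factors as $\det(g_{AB})=\phi^{n+2}\Omega^{n-2}\Lambda$, which is exactly \eqref{detgAB}. This settles the \emph{necessity} direction, and your frame argument even yields $\Omega>0$ for $n\geq 3$ slightly more directly than the paper, which instead argues that $\Omega\neq 0$ and $\Omega\vert_{s=z=0}=\phi\vert_{s=z=0}>0$ force $\Omega>0$ by continuity.

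The genuine gap is in the \emph{sufficiency} direction. You claim that positive definiteness of the core block is ``encoded by the determinant-type quantity $\Lambda$,'' invoking Sylvester's criterion or the factorization of $\det(g_{AB})$ as a positive multiple of $\Lambda\,\Omega^{n-2}$. But positivity of the determinant of a $3\times 3$ block is strictly weaker than its positive definiteness (consider $\mathrm{diag}(-1,-1,1)$), and Sylvester's criterion would require you to identify and verify the two remaining leading principal minors, which do not follow from $\Lambda>0$ alone; your plan contains no mechanism for this. The paper closes precisely this gap with the deformation argument adapted from the $(\alpha,\beta)$-metric theory of \cite{schernShen2005}: set $\phi_t:=(1-t)\sqrt{1+z^2}+t\phi$, so that $F_0=\sqrt{(y^0)^2+\vert\overline{y}\vert^2}$ is Euclidean, and show $\Omega_t>0$ and $\Lambda_t>0$ for all $t\in[0,1]$ --- which in turn requires first extracting from $\Lambda>0$ the auxiliary inequalities $\phi_{zz}>0$ and $\Omega+(r^2-s^2)\phi_{ss}>0$, neither of which appears in your proposal. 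Then $\det(g^t_{AB})=\phi_t^{n+2}\Omega_t^{n-2}\Lambda_t$ never vanishes, the eigenvalues vary continuously in $t$, and positive definiteness at $t=0$ propagates to $t=1$. Without this interpolation (or an explicit check of all leading principal minors of the core block), your argument establishes only that $\Lambda>0$ (and $\Omega>0$ for $n\geq 3$) is necessary, not that it is sufficient.
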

Next, we obtain a system of differential equations which characterize cylindrical symmetric projectively flat Finsler metrics:
\begin{theorem}\label{theo:flat}
	Let $F=\vert\overline{y}\vert{\phi(x^0,z,r,s)}$ be a  Finsler metric defined on $   M =I\times\mathbb{B}^n(\rho) $, where $ z=\frac{y^0}{\vert\overline{y}\vert}, $ $r=\vert\overline{x}\vert$, $ s=\frac{\langle\overline{x},\overline{y}\rangle}{\vert\overline{y}\vert} $ and  $TM $  with coordinates   \eqref{coordxy}.  Then $ F $ is projectively flat if and only if $ \phi $ satisfy
	\begin{align}
		\Omega_{x^0}-\phi_{sz}&=0,\label{eq:flat01}\\
		\Omega_r-r\phi_{ss}&=0,\label{eq:flat02}
	\end{align}
where\[\Omega=\phi-s\phi_s-z\phi_z.\]
\end{theorem}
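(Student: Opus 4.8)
The plan is to start from Hamel's projective flatness criterion quoted in the introduction, namely $F_{x^k y^l}y^k - F_{x^l}=0$, and to substitute the cylindrically symmetric form $F=|\overline{y}|\,\phi(x^0,z,r,s)$ into it. The first step is to record the chain-rule derivatives of the auxiliary variables $z=y^0/|\overline{y}|$, $r=|\overline{x}|$, $s=\langle\overline{x},\overline{y}\rangle/|\overline{y}|$ with respect to both $x^k$ and $y^l$; the only nontrivial ones are $\partial r/\partial x^i = x^i/r$ (for $i\ge 1$), $\partial s/\partial x^i = y^i/|\overline{y}|$, together with $\partial z/\partial y^0 = 1/|\overline{y}|$, $\partial z/\partial y^i = -y^0 y^i/|\overline{y}|^3$, $\partial s/\partial y^0=0$, and $\partial s/\partial y^i$, $\partial |\overline{y}|/\partial y^i = y^i/|\overline{y}|$ for $i\ge 1$. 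Since $x^0$ enters $F$ only through $\phi$, the index $l=0$ equation and the indices $l=i\ge 1$ equations will decouple into two distinct scalar relations after the dust settles.

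\emph{The key reduction.} The natural route is to compute $F_{x^k}$ first, then differentiate in $y^l$ and contract with $y^k$. Here $F_{x^k}$ is nonzero only for $k=0$ and $k=i\ge1$: explicitly $F_{x^0}=|\overline{y}|\phi_{x^0}$ and $F_{x^i}=|\overline{y}|(\phi_r\, x^i/r + \phi_s\, y^i/|\overline{y}|)$. Contracting the derivative $F_{x^k y^l}$ with $y^k$ is where the quantity $\Omega=\phi-s\phi_s-z\phi_z$ should emerge: the combination $\phi - s\phi_s - z\phi_z$ is precisely the ``homogeneous remainder'' one gets when expressing $F$ and subtracting off the parts captured by the $s$- and $z$-derivatives, exploiting that $F$ is positively homogeneous of degree one in $y$. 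I would verify directly that $\Omega$ is the coefficient that organizes the $y^k$-contraction, and that the Euler relation for the degree-one homogeneity of $F$ in $y$ gives the identities needed to collapse several terms.

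\emph{Separating the two equations.} After substitution I expect Hamel's equation to take the schematic form of an expression that is a combination of the independent monomials in $y$ — namely terms proportional to $y^l$, to $x^l$, and (through $z$, via $y^0$) a term that isolates the $l=0$ component. Setting $l=0$ should produce, after simplification, the relation $\Omega_{x^0}-\phi_{sz}=0$, which is \eqref{eq:flat01}. Taking $l=i\ge1$ and comparing the coefficients of the linearly independent vectors $x^i/r$ and $y^i/|\overline{y}|$ (which are independent as functions on $TM$) should produce $\Omega_r - r\phi_{ss}=0$, which is \eqref{eq:flat02}, while the coefficient of $y^i$ should vanish identically or reduce to \eqref{eq:flat01} again, serving as a consistency check. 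The converse direction is immediate: if \eqref{eq:flat01} and \eqref{eq:flat02} hold, the computed Hamel expression vanishes, so $F$ is projectively flat.

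The main obstacle I anticipate is the bookkeeping in the $l=i\ge1$ case: the mixed derivatives generate several competing terms (those from differentiating $|\overline{y}|$, those from $\partial s/\partial y^i$, those from $\partial z/\partial y^i$, and those from $\partial r/\partial x^i$), and one must carefully argue that the vectors $x^i$, $y^i$ on $\mathbb{B}^n(\rho)\times T$ are genuinely independent so that their coefficients may be equated separately. The degree-one homogeneity of $\phi$-type quantities in the pair $(z,|\overline{y}|)$ and the role of $s$ as linear in $\overline{y}$ must be tracked precisely; once the algebra is organized around $\Omega$, however, the two stated equations should fall out cleanly.
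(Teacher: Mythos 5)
Your overall route --- substituting $F=\vert\overline{y}\vert\phi$ into Hamel's criterion and separating the $l=0$ component from the coefficients of the independent vectors $u^i=y^i/\vert\overline{y}\vert$ and $x^i$ --- is in substance the paper's route: although the paper packages the computation in the spray coefficients $Q^A$, the quantities it actually computes, $F_{x^Cy^0}y^C-F_{x^0}=u(\varphi_z-2\phi_{x^0})$ and $F_{x^Cy^j}y^C-F_{x^j}$, with $\varphi=z\phi_{x^0}+\frac{s}{r}\phi_r+\phi_s$, are exactly Hamel's expressions. The genuine gap is your key claim that this separation ``should produce, after simplification,'' equations \eqref{eq:flat01} and \eqref{eq:flat02} directly. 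It does not. What the coefficient comparison yields is the pair
\begin{align*}
r(\phi_{x^0}-z\phi_{x^0z}-\phi_{sz})-s\phi_{rz}&=0,\\
s\phi_{rs}+r(\phi_{ss}+z\phi_{x^0s})-\phi_r&=0,
\end{align*}
i.e.\ the paper's \eqref{eq:flat1}--\eqref{eq:flat2}. Writing $E_1=\Omega_{x^0}-\phi_{sz}$, $E_2=\Omega_r-r\phi_{ss}$ and $R=\phi_{rz}-r\phi_{x^0s}$, these Hamel equations read $rE_1-sR=0$ and $E_2+zR=0$: they differ from the asserted system $E_1=E_2=0$ by multiples of $R$. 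The missing idea --- the actual crux of the paper's proof --- is an integrability step: differentiate the first equation with respect to $s$ and the second with respect to $z$, subtract to conclude $R=0$ (equation \eqref{eq:resolv}), and only then substitute back to obtain \eqref{eq:flat01}--\eqref{eq:flat02}. Without this cross-differentiation your computation terminates at a different, pointwise inequivalent, pair of PDEs, so the proof as planned does not reach the stated equations.

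For the same reason your converse (``if \eqref{eq:flat01} and \eqref{eq:flat02} hold, the computed Hamel expression vanishes'') is not immediate: when $E_1=E_2=0$ the two Hamel expressions equal $-sR$ and $-zR$, so one still needs $R=0$, and this does not follow pointwise from $E_1=E_2=0$. Indeed, $\phi=\sqrt{1+z^2}+x^0zh(r)$ satisfies $E_1=E_2=0$ for any $h$ (the extra term contributes nothing to $\Omega$, $\phi_{sz}$ or $\phi_{ss}$), yet $R=x^0h'(r)\neq 0$ for nonconstant $h$, and the corresponding metric $F=\sqrt{(y^0)^2+\vert\overline{y}\vert^2}+x^0y^0h(\vert\overline{x}\vert)$ (a Finsler metric for $\vert x^0h\vert$ small) is a Randers metric whose one-form $\beta=x^0h(\vert\overline{x}\vert)\,dx^0$ is not closed, hence not projectively flat by the B\'acs\'o--Matsumoto criterion quoted in the introduction. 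So the converse genuinely requires the additional relation $R=0$; the paper itself is too quick here (``the reciprocal is immediately''), but your proposal compounds the difficulty, since it never isolates $R$ at all.
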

Studying this PDE, we obtain a family of solutions of this system:

\begin{theorem}\label{prop:family}
	The family of functions given by
	\begin{equation}\label{phiflat}
		\begin{array}{rl}\phi=&g_1(z)+x^0g_2(z)+sg_3(z)+zg_4(x^0)+sg_5(r)\\ &\\&+\displaystyle\int^s_0\left(\int^\eta_0g_6(r^2-\xi^2)d\xi\right)d\eta+\int^r_0\xi g_6(\xi^2)d\xi,\end{array} 
	\end{equation}
	where $ g_i, i=1,...,6, $ are arbitrary real differentiable functions with $ g_2(z)-zg'_2(z)-g'_3(z)=0, $  are solutions of the system \eqref{eq:flat01}-\eqref{eq:flat02}. Moreover, any Finsler metric on $   M =I\times\mathbb{B}^n(\rho) $ defined by
	\[F(x,y)=\vert\overline{y}\vert\phi\left(x^0,\frac{y^0}{\vert\overline{y}\vert},\vert\overline{x}\vert,\frac{\langle \overline{x},\overline{y}\rangle}{\vert\overline{y}\vert}\right),\]
	where the positive function $ \phi $ is given by \eqref{phiflat} with $ g_i, i=1,...,6 $ satisfy
	\begin{align}
		\Lambda&=\left(g_1(z)-zg'_1(z) + (x^0-sz)g'_3(z) + \frac{1}{2}\int_0^{\small{r^2-s^2}}g_6(\xi)\,d\xi+ (r^2-s^2)g_6(r^2-s^2)\right)\times\nonumber\\
		&\qquad\qquad\qquad\qquad\qquad\left(g''_1(z) + (x^0-sz)g''_2(z)\right) -(r^2-s^2)\left[g'_3(z)\right]^2>0,	\text{ when } n \geq 2,\label{cond1tobefinsler}
	\end{align}
	with the additional inequality
	\begin{align}\label{cond2tobefinsler}
		\Omega=	g_1(z)-zg'_1(z)+(x^0-sz)g'_3(z) + \frac{1}{2}\int_0^{r^2-s^2}g_6(\xi)\,d\xi>0,\qquad \text{ when } n\geq 3,
	\end{align}
	are locally projectively flat Finsler Metric.
\end{theorem}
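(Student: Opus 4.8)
The plan is to prove the two assertions separately: first, that every $\phi$ of the form \eqref{phiflat} satisfying $g_2(z)-zg_2'(z)-g_3'(z)=0$ solves the system \eqref{eq:flat01}--\eqref{eq:flat02}; and second, that under the positivity hypotheses \eqref{cond1tobefinsler}--\eqref{cond2tobefinsler} the associated metric $F=|\overline{y}|\phi$ is a genuine, hence locally projectively flat, Finsler metric by appealing to \thmref{theotobeFinsler} and \thmref{theo:flat}.

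For the first assertion I would differentiate \eqref{phiflat} directly; only a few terms contribute, giving
\[\phi_z=g_1'(z)+x^0g_2'(z)+sg_3'(z)+g_4(x^0),\qquad \phi_s=g_3(z)+g_5(r)+\int_0^s g_6(r^2-\xi^2)\,d\xi.\]
Assembling $\Omega=\phi-s\phi_s-z\phi_z$, the terms $sg_3(z)$, $zg_4(x^0)$ and $sg_5(r)$ cancel and one is left with
\[\Omega=g_1-zg_1'+x^0\bigl(g_2-zg_2'\bigr)-sz\,g_3'+\Bigl[\int_0^s\!\int_0^\eta g_6\,d\xi\,d\eta-s\!\int_0^s g_6\,d\xi\Bigr]+\int_0^r\xi g_6(\xi^2)\,d\xi.\]
The decisive reduction is the bracketed double integral: integrating $\int_0^s\!\int_0^\eta g_6\,d\xi\,d\eta$ by parts against $d\eta$ makes it cancel against $-s\int_0^s g_6\,d\xi$ down to $-\int_0^s\eta g_6(r^2-\eta^2)\,d\eta$, and the substitutions $w=r^2-\eta^2$ and $\zeta=\xi^2$ turn the remaining two single integrals into $\tfrac12\int_0^{r^2-s^2}g_6$. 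Imposing the constraint $g_2-zg_2'=g_3'$ collapses the $x^0$-term into $x^0g_3'$, so that $\Omega$ reduces exactly to \eqref{cond2tobefinsler}. Equation \eqref{eq:flat01} is then immediate, since before the constraint one has $\Omega_{x^0}=g_2-zg_2'$ and $\phi_{sz}=g_3'$, so \eqref{eq:flat01} is precisely equivalent to the hypothesis $g_2-zg_2'-g_3'=0$; this is what forces that constraint. Equation \eqref{eq:flat02} holds unconditionally because $\Omega_r=r\,g_6(r^2-s^2)=r\phi_{ss}$.

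For the second assertion I would substitute the same $\phi$ into the determinant-type quantity $\Lambda$ coming from \thmref{theotobeFinsler}, which I expect to take the form
\[\Lambda=\bigl(\Omega+(r^2-s^2)\phi_{ss}\bigr)\phi_{zz}-(r^2-s^2)\phi_{sz}^2.\]
Here $\phi_{ss}=g_6(r^2-s^2)$ and $\phi_{sz}=g_3'$ are immediate, while $\phi_{zz}$ simplifies via the constraint: differentiating $g_2-zg_2'=g_3'$ yields $g_3''=-zg_2''$, whence $\phi_{zz}=g_1''+x^0g_2''+sg_3''=g_1''+(x^0-sz)g_2''$. Inserting the closed form of $\Omega$ from the first part reproduces \eqref{cond1tobefinsler} verbatim. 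With \eqref{cond1tobefinsler} (and, for $n\ge 3$, \eqref{cond2tobefinsler}) assumed positive, \thmref{theotobeFinsler} certifies that $F$ is a Finsler metric; since $\phi$ already solves \eqref{eq:flat01}--\eqref{eq:flat02}, \thmref{theo:flat} certifies that $F$ is projectively flat in these coordinates, so $F$ is locally projectively flat.

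The step I expect to be the main obstacle is the explicit evaluation of $\Lambda$ for this family: it requires substituting the second derivatives of \eqref{phiflat} into the bulky general Finsler-condition expression of \thmref{theotobeFinsler} and verifying that all $z$-, $r$- and $s$-dependent pieces organize precisely into the compact product of \eqref{cond1tobefinsler}. The integration-by-parts collapse of the double integral in $\Omega$, carried out once and reused, is the other computation where sign and limit errors are easy to make but which is otherwise routine.
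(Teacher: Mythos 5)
Your proposal is correct, but it runs in the opposite direction from the paper's argument. You take the family \eqref{phiflat} as given and \emph{verify} it: you differentiate directly, collapse the double integral by integration by parts (your computation reproduces exactly the paper's identity \eqref{identity}), obtain the closed form of $\Omega$, observe that \eqref{eq:flat01} is precisely the hypothesis $g_2(z)-zg_2'(z)-g_3'(z)=0$ while \eqref{eq:flat02} holds unconditionally because $\Omega_r=rg_6(r^2-s^2)=r\phi_{ss}$, and then evaluate $\Lambda$ (using $g_3''=-zg_2''$, obtained by differentiating the constraint) to recover \eqref{cond1tobefinsler}, finishing by invoking Theorem~\ref{theotobeFinsler} and Theorem~\ref{theo:flat}. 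The paper instead \emph{derives} the family: it starts from the separated ansatz $\phi=\varepsilon_1(x^0,r)+\varepsilon_2(x^0,z)+\varepsilon_3(r,s)+\varepsilon_4(s,z)$, which automatically satisfies the compatibility equation \eqref{eq:resolv}, substitutes it into \eqref{eq:flat01}--\eqref{eq:flat02}, and decouples the system by successive differentiation in $r$, $x^0$ and $s$; the one nontrivial step is solving the first-order quasilinear PDE $s\bigl[(\varepsilon_3)_{ss}\bigr]_r+r\bigl[(\varepsilon_3)_{ss}\bigr]_s=-rf_6'''(s)$ by characteristics, which is what produces the $g_6(r^2-\xi^2)$ structure that you simply read off from \eqref{phiflat}. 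Your verification is more elementary and fully proves the theorem as stated, since the statement only claims that the family consists of solutions; the paper's derivation buys something extra that the statement does not record, namely an explanation of where the family comes from and that it exhausts the solutions of the separated form \eqref{eq:phifamily}. Both routes channel the Finsler and projective-flatness conclusions through the same two theorems, and both hinge on the same integral identity, so your closed forms for $\Omega$ and $\Lambda$ coincide verbatim with \eqref{cond2tobefinsler} and \eqref{cond1tobefinsler}.
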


Finally, using results in other works \cite{MoZoTe2013,HM1,Solorzano2022} we obtain new examples (see Section \ref{exemplesfinal}),  which are a generalization of the warped metrics mentioned above.

\section{Preliminaries}

In this section, we  give some notations, definitions and lemmas that will be used in the proof of our main results.
Let $M$ be a manifold and let $TM=\cup_{x\in M}T_xM$ be the tangent
bundle of $M$, where $T_xM$ is the tangent space at $x\in M$. We
set $TM_o:=TM\setminus\{0\}$ where $\{0\}$ stands for
$\left\{(x,\,0)|\, x\in M,\, 0\in T_xM\right\}$. A {\em Finsler
metric} on $M$ is a function $F:TM\to [0,\,\infty)$ with the
following properties:
\begin{itemize}
	\item[(a)] $F$ is $C^{\infty}$ on $TM_o$;
	\item[(b)] At each point $x\in M$, the restriction $F_x:=F|_{T_xM}$ is a
	Minkowski norm on $T_xM$.
\end{itemize}

Let  $\mathbb{B}^n(\rho)\subset\mathbb{R}^n$ the $n$ dimensional open ball of radius $ \rho $ and centered at the origin ($n\geq 2 $). Set 
 $M=I\times \mathbb{B}^n(\rho),$ with coordinates on $ TM $
\[
	x=(x^0, \overline{x}),\;\overline{x}=(x^1,\ldots,x^n),\;y=(y^0, \overline{y})\;\mbox{ and }\overline{y}=(y^1,\ldots,y^n).\]

Consider the Finsler metric $ F $ defined on $ M $ such that \begin{align}\label{eq:sph2}
	F((x^0,{O}\overline{x}),(y^0,{O}\overline{y}))=F((x^0,\overline{x}),(y^0,\overline{y}))\end{align} 
for every orthogonal $n\times n$ matrix ${O}.$

 Inspired by \cite{Z}, (see also \cite{HM2}) in \cite{Solorzano2022} was proved the following:

\begin{theorem}\cite{Solorzano2022}\label{teoSolor2022}
	A Finsler metric $ F, $ defined on $ M=\mathbb{R}\times\mathbb{B}^n(\rho), $ satisfies \eqref{eq:sph2} if, and only if, there exists a positive differentiable function $ \phi:\mathbb{R}^4\rightarrow \mathbb{R} $ such that
\[F(x,y)=\vert \overline{y}\vert{\phi\left(x^0,\frac{y^0}{\vert \overline{y}\vert},\vert \overline{x}\vert,\frac{\langle\overline{x},\overline{y}\rangle}{\vert\overline{y}\vert}\right)},\]
	where $|\cdot|$ and $\langle \cdot,\cdot\rangle$ are, respectively, the 
	standard Euclidean norm and inner product on $\mathbb{R}^n$.
\end{theorem}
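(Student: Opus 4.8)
\emph{Plan of proof.} The statement is an equivalence, and I would prove the two implications separately. The sufficiency direction is a routine verification, while the necessity direction rests on a classical orbit description for the diagonal action of $O(n)$ on pairs of vectors, followed by an exploitation of the positive $1$-homogeneity of $F$ in $y$.

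\emph{Sufficiency} ($\Leftarrow$). Suppose $F$ has the stated form. For any orthogonal $O$, the substitution $\overline{x}\mapsto O\overline{x}$, $\overline{y}\mapsto O\overline{y}$ (leaving $x^0,y^0$ fixed) preserves each of the four arguments of $\phi$, since $|O\overline{y}|=|\overline{y}|$, $|O\overline{x}|=|\overline{x}|$ and $\langle O\overline{x},O\overline{y}\rangle=\langle\overline{x},\overline{y}\rangle$; the prefactor $|\overline{y}|$ is invariant for the same reason. Hence \eqref{eq:sph2} holds.

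\emph{Necessity} ($\Rightarrow$). The crucial ingredient is an orbit lemma: the diagonal action of $O(n)$ on pairs $(\overline{x},\overline{y})\in\mathbb{R}^n\times\mathbb{R}^n$ has orbits determined exactly by the Gram data $|\overline{x}|^2,\ \langle\overline{x},\overline{y}\rangle,\ |\overline{y}|^2$, i.e.\ two pairs with equal Gram matrices are related by some $O\in O(n)$. I would establish this via Gram--Schmidt: equality of Gram matrices yields a linear isometry between the two spans carrying one pair to the other, and this isometry extends to an element of $O(n)$ because the orthogonal complements have equal dimension (here $n\ge 2$ guarantees enough room). Granting the lemma, I define $\phi$ by evaluating $F$ on a canonical representative normalized to $|\overline{y}|=1$: for $(x^0,z,r,s)$ with $r\ge 0$ and $|s|\le r$, set
\[
\phi(x^0,z,r,s):=F\big((x^0,\,s\,e_1+\sqrt{r^2-s^2}\,e_2),\ (z,\,e_1)\big),
\]
where the pair $\overline{x}=s\,e_1+\sqrt{r^2-s^2}\,e_2$, $\overline{y}=e_1$ has precisely $|\overline{y}|=1$, $|\overline{x}|=r$, $\langle\overline{x},\overline{y}\rangle=s$. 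For a general $(x,y)$, write $z=y^0/|\overline{y}|$, $r=|\overline{x}|$, $s=\langle\overline{x},\overline{y}\rangle/|\overline{y}|$, so that $|s|\le r$ by Cauchy--Schwarz. Then $(\overline{x},\overline{y}/|\overline{y}|)$ shares its Gram data with the canonical pair; the orbit lemma supplies $O\in O(n)$ with $O\overline{x}=s\,e_1+\sqrt{r^2-s^2}\,e_2$ and $O\overline{y}=|\overline{y}|\,e_1$, and invariance \eqref{eq:sph2} followed by $1$-homogeneity of $F$ in $y$ yields $F(x,y)=|\overline{y}|\,\phi(x^0,z,r,s)$. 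Positivity of $\phi$ is inherited from $F>0$ on $TM_o$.

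The main obstacle is the \emph{regularity} of $\phi$. On the open region $r>|s|\ge 0$ the canonical representative depends smoothly on $(r,s)$ (the square root being smooth and nonvanishing there) and $F$ is $C^\infty$ on $TM_o$, so $\phi$ is differentiable there. The delicate loci are $\overline{x}=0$ (where $r=s=0$) and $|s|=r$ (where $\overline{x}$ and $\overline{y}$ become collinear and the distinguished $2$-plane degenerates): there $\sqrt{r^2-s^2}$ loses smoothness and the canonical section fails to be regular. I would treat these by a limiting argument, using the smoothness of $F$ together with the rigidity of the degenerate configurations under their stabilizers to extend $\phi$ differentiably across them. This continuity-and-extension bookkeeping, rather than any single computation, is where the real work lies.
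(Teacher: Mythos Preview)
The paper does not contain a proof of this theorem: it is quoted verbatim from \cite{Solorzano2022} (with an accompanying reference to the analogous spherically symmetric result in \cite{Z,HM2}) and used thereafter as a known fact. There is therefore nothing in the present paper to compare your argument against.

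That said, your outline is the standard and correct one, and it matches in spirit the proofs in \cite{Z,HM2} to which the paper points. The sufficiency direction is immediate, and for necessity your orbit lemma---that the diagonal $O(n)$-action on $\mathbb{R}^n\times\mathbb{R}^n$ has orbits parametrized by the Gram data $(|\overline{x}|,\langle\overline{x},\overline{y}\rangle,|\overline{y}|)$---together with $1$-homogeneity of $F$ in $y$ gives exactly the desired factorization. Your explicit canonical representative $\overline{y}=e_1$, $\overline{x}=s\,e_1+\sqrt{r^2-s^2}\,e_2$ is a clean choice. Two small remarks: first, the theorem's phrasing ``$\phi:\mathbb{R}^4\to\mathbb{R}$'' is loose, and your construction rightly produces $\phi$ only on the relevant domain $\{(x^0,z,r,s):r\ge 0,\ |s|\le r\}$; second, you are right to flag the regularity issue at $r=|s|$ (collinear $\overline{x},\overline{y}$) as the only nontrivial point, and the usual remedy---changing the canonical section locally, or invoking the smoothness of $F$ directly in the original variables---handles it. None of this differs from the route one would expect in the cited source.
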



For $ \vert\overline{y}\vert\neq  0,$ we introduce the notation 
\[z:= \frac{y^0}{\vert\overline{y}\vert},\;\;r:=|\overline{x}|\;\;\mbox{and}\;\;s:=\frac{\langle \overline{x},\,\overline{y}\rangle}{\vert\overline{y}\vert},\] 
where $|\,.\,|$ and $\langle \,,\rangle$ are, respectively, the 
standard Euclidean norm and inner product on $\mathbb{R}^n$. 

Throughout our work, the following convention for indices is adopted: 
\[	0\leq A, B, \ldots \leq n\;\;\mbox{and}\;\;1\leq i,j,\ldots \leq n.  
\]
%

Defining $ \Omega $ and $ \Lambda $ by
\begin{align}
	\Omega:=&\phi-s\phi_s-z\phi_z \label{DefOmega}\\
	 \Lambda:=& \Omega \phi_{zz}+(r^2-s^2)(\phi_{ss}\phi_{zz}-\phi^2_{sz}),\label{DefLambda}
\end{align}
 the matrix $ \left(g_{AB}\right)=\frac{1}{2} [F^2]_{y^Ay^B}=
\left(
\begin{array}{c|c}
	g_{00} & g_{0j} \\
	\hline
	g_{i0} & g_{ij}
\end{array}
\right),$ is given by
%
\[g_{00}=\phi^2_z+\phi\phi_{zz},\;\;
g_{i0}=g_{0i}=(\phi\Omega)_zu^i+(\phi_s\phi_z+\phi\phi_{sz})x^i,\]	
	\[\;\;\mbox{and}\;\;g_{ij}=\phi\Omega\delta_{ij} + X_{ij},\]
where $ X_{ij}=(u^i, x^i)\left(\begin{array}{c c}
	-[s(\phi\Omega)_s+z(\phi\Omega)_z] & (\phi\Omega)_s\\
	(\phi\Omega)_s& \phi_s^2+\phi\phi_{ss} 
\end{array}\right)\left(\begin{array}{c}
	u^j\\
	x^j
\end{array}\right),$ with $ u^i=\frac{y^i}{\vert\overline{y}\vert}. $  
 
Note that,
\[\det(g_{AB})={(\phi_z^2+\phi\phi_{zz})^{1-n}}\phi^n\det\left((\phi_z^2+\phi\phi_{zz})\Omega\delta_{ij}+\delta_0u^iu^j + \delta_1(x^i + u^i)(x^j+u^j) + \delta_2x^ix^j
\right),\]
where,
\begin{align*}
	\delta_0=&-\left[(\phi\Omega)_z\left((\phi\Omega)_z-\phi_s\phi_z-\phi\phi_{sz}\right) + (\phi_z^2+\phi\phi_{zz})\left((s+1)(\phi\Omega)_s+z(\phi\Omega)_z\right)\right], \\
	\delta_1=&(\phi\Omega)_s(\phi_z^2+\phi\phi_{zz})-(\phi\Omega)_z(\phi_s\phi_z+\phi\phi_{sz}),\\
	\delta_2=&(\phi\phi_{zz}+\phi_z^2)\left(\phi\phi_s+\phi_s^2-(\phi\Omega)_s\right)-(\phi_s\phi_z+\phi\phi_{sz})\left(\phi_s\phi_z+\phi_{sz}^2 - (\phi\Omega)_z\right).
\end{align*}
Then, the determinant of $ g_{AB} $ is given by \begin{align}\label{detgAB}
	\det(g_{AB})=\phi^{n+2}\Omega^{n-2}\Lambda.
\end{align}

As mentioned in \cite{Kozma2001}, the called warped metrics $ F=\sqrt{F_1^2 + f^2F_2^2}, $ where $ (M,F_1), (N,F_2) $ are Finsler manifolds and $ f $ is a smooth function on $ M, $ are not necessarily smooth on the vectors of the form $ (v_1,0) $ and $ (0,v_2) \in TM\times TN.$ However, the first two Shen's examples in the introduction part motivate us to formalize a Finsler metric which satisfies \eqref{eq:sph2} although warped type metrics can be obtained by similar technician.

\section{Proof of Theorems \ref{theotobeFinsler}, \ref{theo:flat} and \ref{prop:family}}

\begin{proof}[Proof of Theorem \ref{theotobeFinsler}]
Suppose that $ \Omega, \Lambda >0. $ From $ \Lambda>0 $ we conclude that $ \phi_{zz}>0. $ In fact, suppose that there exist $ x^0_0,z_0,r_0,s_0$  such that $ \phi_{zz}({x^0_0,z_0,r_0,s_0})=0 $, then $ \Lambda|_{x^0_0,z_0,r_0,s_0}=-(r^2-s^2)\phi_{sz}^2\leq 0 $ and, if $ \phi_{zz}<0, $ hence $ \Lambda|_{r=s=0} = \Omega\phi_{zz}\vert_{r=s=0}<0.$ Additionally, from $ \Lambda>0 $ we obtain $ \Omega+(r^2-s^2)\phi_{ss}> 0. $

 Now, we consider the family of functions $ \phi_t $ defined by,
	\[\phi_t:=(1-t)\sqrt{1+z^2}+t\phi.\]
Let $ F_t:=\vert \overline{y}\vert\phi_t $ and $ g_{AB}^t:=\frac{1}{2}\left[F_t^2\right]_{y^Ay^B}. $ Note that, for any $ 0\leq t\leq 1, $ we have
\begin{align*}
	\Omega_t:=&\phi_t-s(\phi_t)_s-z(\phi_t)_z=\frac{1-t}{\sqrt{1+z^2}}+t\Omega>0,\\
	\Lambda_t:=&\Omega_t(\phi_t)_ {zz}+(r^2-s^2)\left((\phi_t)_{ss}(\phi_t)_{zz}-(\phi_t)^2_{sz}\right)\\
	=&\frac{(1-t)^2}{(1+z^2)^2}+\frac{t(1-t)}{\sqrt{1+z^2}}\phi_{zz}+\frac{t(1-t)}{(1+z^2)^{3/2}}\left(\Omega+(r^2-s^2)\phi_{ss}\right)+t^2\Lambda>0.
\end{align*}
Thus $ \det(g^t_{AB})>0 $ for all $ 0\leq t\leq 1. $ Since $ (g^0_{AB}) $ is positive definite, we conclude that $ (g^t_{AB}) $ is positive definite for any $ t\in [0,1]. $	Therefore, $ F_t $ is a Finsler metric for any $ t\in [0,1]. $

Conversely, assume that $ F=\vert\overline{y}\vert\phi(x^0,z,r,s) $ is a Finsler metric on $ M. $ Then $ \phi>0. $ By \eqref{detgAB}, $ \det(g_{AB})>0 $ is equivalent to
\begin{align}\label{desigOL}
	\Omega^{n-2}\Lambda>0,
\end{align} 
which implies $ \Omega\neq 0 $ when $ n\geq 3. $ Since $ \phi\vert_{s=z=0}>0, $ we have $ \Omega\vert_{s=z=0}>0 $, then $ \Omega>0. $ And by \eqref{desigOL}  we have $ \Lambda>0. $

\end{proof}


The inverse of $(g_{AB})$ is given by,
\[ 
\left(g^{AB}\right)=
\frac{1}{\phi^4\Lambda}\left(
\begin{array}{c|c}
	y^{00} & y^{0j} \\
	\hline
	y^{i0} & \frac{\phi^3\Lambda}{\Omega}\delta_{ij}+ Y_{ij} 
\end{array}
\right)
\]
where
\begin{align*}
	y^{00}=& \phi\Omega((\phi-z\phi_z)^2+z^2\phi\phi_{zz}) + (r^2-s^2)\phi \left(\phi^2\phi_{ss}+2z\phi(\phi_s\phi_{sz}-\phi_z\phi_{ss}) + z^2\delta_3\right)\\
	y^{0i}=&\phi\left[-(\Omega+s\phi_s)(\phi\Omega)_z+(r^2-s^2)(\phi(\phi_s\phi_{sz}-\phi_z\phi_{ss})+z\delta_3)\right]u^i + \phi^2[\phi_s\Omega_z-\phi_{sz}\Omega]x^i
\end{align*}
with
\begin{align*}
	\delta_3=&\phi(\phi_{ss}\phi_{zz}-\phi_{sz}^2)+\phi_s(\phi_s\phi_{zz}-\phi_z\phi_{sz})+\phi_z(\phi_{ss}\phi_z-\phi_s\phi_{sz})\\
	=&-\det \left(
	\begin{array}{c c c}
		-\phi & \phi_s&\phi_z \\
		\phi_s & \phi_{ss}&\phi_{sz}\\
		\phi_z&\phi_{sz}&\phi_{zz}
	\end{array}
	\right),
\end{align*}
and 
\begin{align*}
	Y_{ij}=&	\frac{1}{\phi\Omega}(u^i, x^i)\left(\begin{array}{c c}
		y_{11} & y_{12}\\
		y_{12}& y_{22} 
	\end{array}\right)\left(\begin{array}{c}
		u^j\\
		x^j
	\end{array}\right) 
\end{align*}
where
\begin{align*}
	y_{11}=&\phi^2\left[(\phi\Omega)_z^2+\phi\phi_{zz}(z(\phi\Omega)_z+s(\phi\Omega)_s)-(r^2-s^2)(\phi^2(\phi_{ss}\phi_{zz}-\phi_{sz}^2)-\Omega\delta_2)\right]\\
	y_{12}=&y_{21}=\phi^3\left[\phi_{sz}(\phi\Omega)_z-\phi_{zz}(\phi\Omega)_s\right]\\
	y_{22}=&-\phi^4(\phi_{ss}\phi_{zz}-\phi_{sz}^2).
\end{align*}
The next proposition gives us one the most important quantities in Finsler Geometry: The geodesic coefficients \begin{align}\label{def:GA}
	 G^A=Py^A+Q^A, \end{align}
where
\begin{equation}\label{eq:PQ}
	P:=\frac{F_{x^C}y^C}{2F}\;\;\;\mbox{and}\;\;\;Q^A:=\frac{F}{2}g^{AB}\left\{F_{x^Cy^B}y^C-F_{x^B}\right\},
\end{equation}
for a cylindrically simmetric Finsler metric $ F.$

\begin{proposition}
	$F=\vert\overline{y}\vert{\phi(x^0,z,r,s)}$ be a Finsler metric defined on $ M =I\times\mathbb{B}^n(\rho)$, where $ z=\frac{y^0}{\vert\overline{y}\vert}, $ $r=\vert\overline{x}\vert$, $ s=\frac{\langle\overline{x},\overline{y}\rangle}{\vert\overline{y}\vert} $ and  $TM $  with coordinates   \eqref{coordxy}.  Then the geodesic spray coefficients $ G^A $ are given by
	\begin{align*}
		G^0=&u^2\left\{z(W+sU)+\frac{\Omega}{2\Lambda}(\varphi_z-2\phi_{x^0})-(r^2-s^2)V\right\},\\
		G^i=&uWy^i + u^2Ux^i,
	\end{align*}
where  $ \Omega, \Lambda $ are  in \eqref{DefOmega}, \eqref{DefLambda} respectively, $ u=\vert\overline{y} \vert$, and
\begin{align*}
	\varphi &:=z\phi_{x^0}+\frac{s}{r}\phi_r+\phi_s,\\
	W&:=\frac{1}{\phi}\left\{\frac{\varphi}{2}-s\phi U - \frac{\phi_z\Omega}{2\Lambda}(\varphi_z-2\phi_{x^0})-(r^2-s^2)\left[{\phi_s}U-{\phi_z}V\right]\right\},\\
	U&:=\frac{1}{2\Lambda}\left\{\left(\varphi_s-\frac{2}{r}\phi_r\right)\phi_{zz}-\left(\varphi_z-2\phi_{x^0}\right)\phi_{sz}\right\},
\end{align*}
\begin{equation}\label{def:V}
V:=\frac{1}{2\Lambda}\left\{\left(\varphi_s-\frac{2}{r}\phi_r\right)\phi_{sz}-\left(\varphi_z-2\phi_{x^0}\right)\phi_{ss}\right\}.
\end{equation}

\end{proposition}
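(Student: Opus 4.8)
The plan is to compute the two pieces of $G^A = Py^A + Q^A$ directly from the definitions \eqref{eq:PQ}, working through the intermediate variables $z$, $r$, $s$ and $u:=|\overline{y}|$ by the chain rule. First I would record the elementary derivatives $z_{y^0}=1/u$, $z_{y^i}=-zu^i/u$, $s_{y^i}=(x^i-su^i)/u$, $s_{x^i}=u^i$, $r_{x^i}=x^i/r$, together with the contraction identities $\sum_i u^iu^i=1$, $\sum_i u^ix^i=s$ and $\sum_i x^ix^i=r^2$; these are used repeatedly to collapse the sums that arise when contracting against the inverse metric.

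The coefficient $P$ is immediate: a short computation gives $F_{x^C}y^C=u^2\varphi$ with $\varphi=z\phi_{x^0}+\tfrac{s}{r}\phi_r+\phi_s$, so $P=\tfrac{u\varphi}{2\phi}$, and $Py^A$ supplies the $z\varphi/(2\phi)$-type contribution (which ultimately sits inside the $W$ term of $G^0$). For the Hamel quantities $H_B:=F_{x^Cy^B}y^C-F_{x^B}$ I would avoid differentiating $F$ twice by using the identity $H_B=(F_{x^C}y^C)_{y^B}-2F_{x^B}=(u^2\varphi)_{y^B}-2F_{x^B}$. Carrying this out yields $H_0=u(\varphi_z-2\phi_{x^0})$ and a clean split $H_i=u\bigl(\varphi_s-\tfrac{2}{r}\phi_r\bigr)x^i+u(\,\cdots)u^i$; the combinations $\varphi_z-2\phi_{x^0}$ and $\varphi_s-\tfrac{2}{r}\phi_r$ appearing here are exactly those that enter $U$ and $V$ in \eqref{def:V}, which is the main structural clue.

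Next I would form $Q^A=\tfrac{F}{2}g^{AB}H_B$ using the explicit inverse metric $(g^{AB})$ recorded above, splitting $Q^i$ into its $u^i$- and $x^i$-components and isolating $Q^0$. Since $(g^{AB})$ is already expressed along the basis $u^i$, $x^i$ (with $y^i=uu^i$), each contraction reduces through the identities of the first paragraph to scalar coefficients; matching the $u^i$-coefficient of $G^i$ against $u^2W$, its $x^i$-coefficient against $u^2U$, and the scalar part against $G^0$ then reads off $W$, $U$ and $V$.

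The main obstacle is precisely this final contraction and the algebraic simplification that follows: the entries $y^{00}$, $y^{0i}$, $Y_{ij}$ are long, and collapsing the result into the compact form governed by $\Omega$ and $\Lambda$ requires careful use of the determinant identity relating $\delta_3$ to $\phi_{ss}\phi_{zz}-\phi_{sz}^2$, of the expression for $\delta_2$, and of the definition $\Lambda=\Omega\phi_{zz}+(r^2-s^2)(\phi_{ss}\phi_{zz}-\phi_{sz}^2)$. The organizing principle that makes the cancellations close is that $U$ and $V$ are nothing but Cramer's rule for the Hessian block $\left(\begin{smallmatrix}\phi_{ss}&\phi_{sz}\\\phi_{sz}&\phi_{zz}\end{smallmatrix}\right)$ applied to the data $\bigl(\varphi_s-\tfrac{2}{r}\phi_r,\ \varphi_z-2\phi_{x^0}\bigr)$, rescaled by $2\Lambda$; recognizing this early keeps the bookkeeping manageable.
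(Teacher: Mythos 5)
Your proposal is correct and takes essentially the same route as the paper's proof: compute $P$ from $F_{x^C}y^C=u^2\varphi$, form the Hamel quantities $F_{x^Cy^B}y^C-F_{x^B}$ (obtaining exactly $u(\varphi_z-2\phi_{x^0})$ and the $u^i$/$x^i$-split with coefficient $\varphi_s-\tfrac{2}{r}\phi_r$), and contract with the explicit inverse metric $(g^{AB})$ to identify $U$, $V$, $W$ in $G^A=Py^A+Q^A$. Your identity $H_B=(u^2\varphi)_{y^B}-2F_{x^B}$ is a harmless computational shortcut (the paper instead writes out the mixed second derivatives $F_{x^Cy^B}$ directly), not a genuinely different method.
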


\begin{proof}

The derivatives of $ F=\vert\overline{y}\vert\phi, $ with respect to $ x^0, x^i, y^0 $ and $ y^i $ are given by
	\begin{align*}
		F_{x^0}&=u\phi_{x^0},\\
		F_{x^i}&=u\left(\phi_su^i+ {\phi_r}\frac{x^i}{r}\right),\\
		F_{y^0}&=\phi_z,\\
		F_{x^0y^0}&= \phi_{x^0z},\\
		F_{x^iy^0}&=\phi_{sz}u^i+\phi_{rz}\frac{x^i}{r},\\
		F_{y^i}&=\Omega u^i+\phi_s {x^i},\\
		F_{x^0y^i}&=\Omega_{x^0}u^i + \phi_{x^0s}x^i,\\
		F_{x^iy^j}&=\phi_s\delta_{ij}+ \Omega_s u^iu^j + \phi_{ss}\frac{y^i}{u}x^j+\Omega_r\frac{x^i}{r}u^j + \frac{1}{r}\phi_{rs}x^ix^j.
	\end{align*}
	Then,
	\begin{align*}
		F_{x^Cy^0}y^C-F_{x^0}&=u\left(\varphi_z-2\phi_{x^0}\right),\\
		F_{x^Cy^j}y^C-F_{x^j}&=u\left(\left[-s(\varphi_s-\frac{2}{r}\phi_r)-z(\varphi_z-2\phi_{x^0})\right]u^j+\left[\varphi_s-\frac{2}{r}\phi_r\right]x^j\right).
	\end{align*}
	Thus, from \eqref{eq:PQ}, we have
	\begin{align}
		P=&\frac{u^2}{2F}\varphi\label{PU},\\
		Q^0=&u^2\left\{\frac{1}{2\phi\Lambda}(\phi-z\phi_z)(\varphi_z-2\phi_{x^0})\Omega - (r^2-s^2)\left(z\frac{\phi_s}{\phi}U + \frac{\phi-z\phi_z}{\phi}V\right)\right\}\label{Q0U},\\
		Q^i=&-u^2\left\{sU+(r^2-s^2)(\frac{\phi_s}{\phi}U-\frac{\phi_z}{\phi}V) + \frac{\phi_z\Omega}{2\phi\Lambda}(\varphi_z-2\phi_{x^0})\right\}\frac{y^i}{u}+{u^2}Ux^i\label{QiU}.
	\end{align}
Replacing \eqref{PU}, \eqref{Q0U} and \eqref{QiU} into \eqref{def:GA}, we have the result.
\end{proof}

A Finsler metric $  F $ is \textit{projectively flat} if, and only if, $ F $ satisfies \[F_{x^ky^l}y^k-F_{x^l}=0.\]
From \eqref{Q0U} and \eqref{QiU} we obtain a characterization of $ F=\vert \overline{y}\vert\phi $ which are projectively flat. 
\begin{proof}[Proof of Theorem \ref{theo:flat}]
	From \eqref{Q0U} and \eqref{QiU}, the Finsler metric  $F=\vert\overline{y}\vert{\phi(x^0,z,r,s)} $ is projectively flat if, and only if, 
	\begin{equation}\label{systemflat01}
		\left\{\begin{array}{rl}
			U&=0,\\
		(r^2-s^2)\frac{\phi_z}{\phi}V&=\frac{\phi_z}{2\phi\Lambda}\Omega(\varphi_z-2\phi_{x^0}),\\
	(r^2-s^2)\left(\frac{\phi-z\phi_z}{\phi}\right)V&=\frac{\Omega}{2\phi\Lambda}(\phi-z\phi_z)(\varphi_z-2\phi_{x^0}).
	\end{array}\right.
\end{equation}
	Using the fact $ \phi-z\phi_z \neq 0, $   the  system \eqref{systemflat01} is equivalent to  
	\begin{align}\label{eq:flat0}
		\left\{\begin{array}{rl}
			(\varphi_s-\frac{2}{r}\phi_r)\phi_{zz}&=(\varphi_z-2\phi_{x^0})\phi_{sz},\\
		(\varphi_z-2\phi_{x^0})\Omega&=2(r^2-s^2)\Lambda V.
		\end{array}\right.
	\end{align}
From the first equation of \eqref{eq:flat0} and \eqref{def:V} we have
 \[V=\frac{1}{2\phi_{zz}\Lambda}(\varphi_z-2\phi_{x^0})(\phi^2_{sz}-\phi_{ss}\phi_{zz}).\] 
 Substituting this in the second equation of \eqref{eq:flat0}, and using the fact $ \Lambda\neq 0, \forall y$ we have that the system \eqref{eq:flat0} implies
\[\varphi_z-2\phi_{x^0}=0,\] substituting this into the system \eqref{eq:flat0}, we obtain that the system \eqref{eq:flat0}	is equivalent to \begin{align}
	r(\phi_{x^0}-z\phi_{x^0z}-\phi_{sz})-s\phi_{rz}&=0,\label{eq:flat1}\\
	s\phi_{rs}+r(\phi_{ss}+z\phi_{x^0s}) - \phi_r&=0.\label{eq:flat2}
\end{align} Derivating \eqref{eq:flat1} and \eqref{eq:flat2} in relation to $ s $ and $ z $ respectively, and subtracting them, we have 
\begin{align}\label{eq:resolv}
\phi_{rz}-r\phi_{x^0s}=0,
\end{align} 
using \eqref{eq:resolv} in  equations \eqref{eq:flat1} and \eqref{eq:flat2} we have the result.
The reciprocal is immediately.
\end{proof}

\begin{proof}[Proof of Theorem \ref{prop:family}]
	To construct this family, note that $\phi$ given by  
	\begin{align}\label{eq:phifamily}
		\phi(x^0,z,r,s)=\varepsilon_1(x^0,r)+\varepsilon_2(x^0,z)+\varepsilon_3(r,s)+\varepsilon_4(s,z),
	\end{align}
	where $\varepsilon_1,\varepsilon_2, \varepsilon_3$ and $\varepsilon_4$ are real differentiable functions, satisfy  \eqref{eq:resolv}.
	
	Observe that equations \eqref{eq:flat01} and \eqref{eq:flat02}, in this case, become
	\begin{align}
		\Omega_{x^0}-\phi_{sz}&=[\varepsilon_1]_{x^0}+[\varepsilon_2]_{x^0}-z[\varepsilon_2]_{x^0z}-[\varepsilon_4]_{sz}=0\label{eq:omegax},\\
		\Omega_r-r\phi_{ss}&=[\varepsilon_1]_r+[\varepsilon_3]_r-s[\varepsilon_3]_{rs}-r[\varepsilon_3+\varepsilon_4]_{ss}=0.\label{eq:omegar}
	\end{align} 
	Taking the derivative of \eqref{eq:omegax} with respect to $r$, we have 
	\[[\varepsilon_1]_{x^0r}=0,\]
	therefore exists real differential functions $f_1,f_2:\mathbb{R}\rightarrow\mathbb{R} $ such that
	\begin{align}\label{epsilon1}
		\varepsilon_1=f_1(x^0)+f_2(r).
		\end{align}
	Now, taking the derivative of \eqref{eq:omegax} with respect to $x^0$ we get
	\[[\varepsilon_2]_{x^0x^0}-z[\varepsilon_2]_{x^0x^0z}= -f''_1(x^0),\]
	therefore, there exists real differentiable functions $f_3,f_4,f_5:\mathbb{R}\rightarrow\mathbb{R} $ such that
	\begin{align}\label{epsilon2}
		\varepsilon_2=-f_1(x^0)+zf_3(x^0)+x^0f_4(z) + f_5(z).
	\end{align}
	Again, taking the derivative of \eqref{eq:omegax} with respect to $s$, we have 
	\[[\varepsilon_4]_{zss}=0,\]
	therefore there exists real differential functions $f_6,f_7,f_8:\mathbb{R}\rightarrow\mathbb{R} $ such that
	\begin{align}\label{epsilon4}
		\varepsilon_4=f_6(s) + f_7(z) + sf_8(z).
		\end{align}
	Also, taking the derivative of \eqref{eq:omegar} with respect to $s$, we have
	\[s\Big[[\varepsilon_3]_{ss}\Big]_{r}+r\Big[[\varepsilon_3]_{ss}\Big]_{s}=-rf^{\prime\prime\prime}_6(s),
	\]
	now using characteristic equations in this non-homogeneous quasi linear PDE, we obtain that there exists a real differentiable function $f_9:\mathbb{R}\rightarrow\mathbb{R}$ such that
	\[[\varepsilon_3]_{ss}=f_9(r^2-s^2)-f^{\prime\prime}_6(s),\]
	consequently, together \eqref{eq:omegar}, there exists real differential function $f_{10}:\mathbb{R}\rightarrow\mathbb{R}$ and constant $k\in\mathbb{R}$ such that
	\begin{align}\label{epsilon3}
		\varepsilon_3=k-f_2(r)-f_6(s)+sf_{10}(r)+\int^s_0\left(\int^\eta_0f_9(r^2-\xi^2)d\xi\right)\,d\eta+\int^r_0 \rho f_9(\rho^2)\,d\rho.
	\end{align}
	Also substituting \eqref{epsilon1}, \eqref{epsilon2}, \eqref{epsilon4} and $ \eqref{epsilon3}  $ in \eqref{eq:phifamily} and using  \eqref{eq:omegax}, we have
	\[f_4(z)-zf'_4(z)-f'_8(z)=0.\]
	Finally, the identity \begin{align}\label{identity}
		\int^s_0\int^\eta_0g_6(r^2-\xi^2)d\xi d\eta+\int^r_0 \rho g_6(\rho^2)d\rho=&\frac{1}{2}\int_0^{r^2-s^2}g_6(\xi)\,d\xi + s\int_0^sg_6(r^2-\xi^2)\,d\xi,
	\end{align} and the Theorem \ref{theotobeFinsler} give us the conditions \eqref{cond1tobefinsler}-\eqref{cond2tobefinsler}.
\end{proof}
 
\begin{remark} As seen in the proof above, we give another useful expression for the general solution of \[s\phi_{bs}+b\phi_{ss}-\phi_b=0\] founded in Proposition 5.1 in \cite{HM2}:
	\begin{align}\label{Mosol}
		\phi(b,s)=sh(b)-s\int_{s_0}^st^{-2}g(b^2-t^2)\,dt.
	\end{align}
	In fact, it is enough to take the derivative of the equation with respect to $s$, to getting 
	\[s\Big[\phi_{ss}\Big]_{b}+b\Big[\phi_{ss}\Big]_{s}=0.
	\]
	Using characteristic equations in the homogeneous quasi linear PDE, we obtain that there exists a real differentiable function $f:\mathbb{R}\rightarrow\mathbb{R}$ such that
	\[\phi_{ss}=f(b^2-s^2),\]
	therefore, there exists real differentiable functions $g,h:\mathbb{R}\rightarrow\mathbb{R}$ and constant $k\in\mathbb{R}$ such that
	\begin{align}\label{useful}
		\phi=k+sg(b)+\int^s_0\left(\int^\eta_0f(b^2-\xi^2)d\xi\right)d\eta+\int^b_0 \rho f(\rho^2)d\rho,
	\end{align}
by the identity \eqref{identity}
we have that conditions $ \phi-s\phi_s>0 $ and $ \phi-s\phi_s+(b^2-s^2)\phi_{ss}>0 $ become
\begin{align}\label{conditionspherica1} k+\dfrac{1}{2}\displaystyle \int^{b^2-s^2}_0f(u)du>0,\;\forall\;b\geq|s|, 
	\end{align} and \begin{align}\label{conditionspherica2}
	k+\dfrac{1}{2}\displaystyle \int^{b^2-s^2}_0f(\xi)d\xi+(b^2-s^2)f(b^2-s^2)>0,\;\forall\;b\geq|s|, 
	\end{align} respectively.

 The relation between $ g $ and $ f $ from \eqref{Mosol} and \eqref{useful} is given by:
\[f=2g'.\]
\begin{remark}
Observe that, for every constant $ k>0 $ and any function $ f $ such that $ f(\xi)\geq 0, \forall \xi\geq 0,$ the conditions \eqref{conditionspherica1}-\eqref{conditionspherica2} are satisfied.
\end{remark}

\end{remark}

\begin{remark}
	The identity \eqref{identity}  give us two forms to write the function $ \phi $ in \eqref{phiflat}. 
\end{remark}

We can use functions founded in \cite{MoZoTe2013}, \cite{HM1} and \cite{Solorzano2022} for construct  warped-type Finsler metrics, i.e.
\begin{corollary}\label{corollary01}
	Let $ \phi=\phi(x^0,z,r,s) $ be a positive function defined by
	\begin{align}\label{Cor:01}
		\phi=k+g_1(z)+zg_4(x^0)+sg_5(r)+\frac{1}{2}\int_0^{r^2-s^2}g_6(\xi)\,d\xi + s\int_0^sg_6(r^2-\xi^2)\,d\xi,
	\end{align}
where $ k\in\mathbb{R} $ is  constant and $ g_1,g_4,g_5, g_6:\mathbb{R}\rightarrow\mathbb{R}, $ are differentiable functions, such that
	\begin{enumerate}
		\item[(a)] $g_1(z)+zg_4(x^0)>0,\;g_1(z)-zg_1^\prime(z)>0\;\mbox{and}\;g_1^{\prime\prime}(z)>0,\;\forall z\in\mathbb{R}$;
		\item[(b)]  $k+ \dfrac{1}{2}\displaystyle \int^{r^2-s^2}_0g_6(\xi)d\xi+(r^2-s^2)g_6(r^2-s^2)\geq0,\;\forall\;r\geq|s|,  $ when $ n\geq 2, $\\
		with additional inequality $k+\dfrac{1}{2}\displaystyle \int^{r^2-s^2}_0g_6(u)du\geq 0,\;\forall\;r\geq|s|,$ when $ n\geq 3, $
	\end{enumerate} 
then, the  warped type Finsler metric on $ M =I\times \mathbb{B}^n$ given by \[ F=\vert\overline{y}\vert\phi\left(\frac{y^0}{\vert\overline{y}\vert},\vert\overline{x}\vert,\frac{\langle \overline{x},\overline{y}\rangle}{\vert\overline{y}\vert}\right), \]  is locally projectively flat.
\end{corollary}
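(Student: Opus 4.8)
The plan is to recognize \eqref{Cor:01} as a particular member of the family \eqref{phiflat} and then to read off the Finsler conditions \eqref{cond1tobefinsler}--\eqref{cond2tobefinsler} of Theorem \ref{prop:family} for that member, so that the whole statement reduces to a direct application of Theorem \ref{prop:family}. First I would apply the identity \eqref{identity} to the last two terms of \eqref{Cor:01}, converting them into $\int^s_0\big(\int^\eta_0g_6(r^2-\xi^2)\,d\xi\big)\,d\eta+\int^r_0\xi g_6(\xi^2)\,d\xi$. After this rewriting, \eqref{Cor:01} is exactly \eqref{phiflat} with the choices $g_2\equiv 0$, $g_3\equiv 0$, the first function replaced by $k+g_1$, and $g_4,g_5,g_6$ unchanged. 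The constraint $g_2(z)-zg_2'(z)-g_3'(z)=0$ demanded by Theorem \ref{prop:family} then becomes the trivial $0=0$, so $\phi$ legitimately lies in the family, and it already satisfies the projective-flatness system \eqref{eq:flat01}--\eqref{eq:flat02}.

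With $g_2\equiv g_3\equiv 0$, the expressions \eqref{cond1tobefinsler} and \eqref{cond2tobefinsler} simplify considerably: every term carrying a factor $g_3'$ or $g_2''$ drops out, the cross term $(r^2-s^2)[g_3'(z)]^2$ vanishes, and since the derivative of the absorbed constant $k$ vanishes the surviving quantities are
\[\Omega=\big(g_1(z)-zg_1'(z)\big)+\Big(k+\tfrac{1}{2}\textstyle\int_0^{r^2-s^2}g_6(\xi)\,d\xi\Big),\]
\[\Lambda=\Big[\big(g_1(z)-zg_1'(z)\big)+\big(k+\tfrac{1}{2}\textstyle\int_0^{r^2-s^2}g_6(\xi)\,d\xi+(r^2-s^2)g_6(r^2-s^2)\big)\Big]\,g_1''(z).\]
Next I would verify the two inequalities by splitting each bracket into a strictly positive piece coming from (a) and a nonnegative piece coming from (b). Cauchy--Schwarz gives $|s|\le r$, hence $r^2-s^2\ge 0$, which is why the hypotheses are imposed for all $r\ge|s|$ and why the integrals run over a genuine range. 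Hypothesis (a) provides $g_1(z)-zg_1'(z)>0$ and $g_1''(z)>0$; the two clauses of (b) say precisely that $k+\tfrac12\int_0^{r^2-s^2}g_6(\xi)\,d\xi+(r^2-s^2)g_6(r^2-s^2)\ge 0$ (for $n\ge2$) and $k+\tfrac12\int_0^{r^2-s^2}g_6(\xi)\,d\xi\ge0$ (the additional one, for $n\ge3$). Adding a strictly positive quantity to a nonnegative one makes each bracket strictly positive, so the leading factor of $\Lambda$ is positive and, multiplied by $g_1''>0$, yields $\Lambda>0$ for $n\ge2$; the same splitting gives $\Omega>0$ for $n\ge3$. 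Since $\phi>0$ is a standing hypothesis, all conditions of Theorem \ref{prop:family} hold and that theorem delivers the conclusion that $F=|\overline{y}|\phi$ is a locally projectively flat Finsler metric.

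I expect the genuine difficulty to be bookkeeping rather than ideas: one must track the constant $k$ carefully (its derivative disappears from $g_1-zg_1'$ but $k$ itself persists inside $\Omega$ and the leading factor of $\Lambda$), and match the two clauses of (b) to the two brackets produced, respectively, by $\Lambda$ and by $\Omega$, with the $n\ge2$ versus $n\ge3$ split exactly mirroring the two regimes in Theorem \ref{prop:family}. I would also flag that the first inequality in (a), $g_1(z)+zg_4(x^0)>0$, does not feed the determinant conditions at all; its only role is to support the positivity hypothesis $\phi>0$, so one should not search for it inside $\Lambda$ or $\Omega$.
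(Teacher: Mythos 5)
Your proposal is correct and follows essentially the same route as the paper's own proof: specialize Theorem \ref{prop:family} to $g_2\equiv g_3\equiv 0$ (with $k$ absorbed into $g_1$, so the constraint $g_2-zg_2'-g_3'=0$ holds trivially), then verify \eqref{cond1tobefinsler}--\eqref{cond2tobefinsler} by combining the strictly positive terms $g_1-zg_1'>0$, $g_1''>0$ from (a) with the nonnegative integral terms from (b). Your write-up is in fact more explicit than the paper's terse proof, e.g.\ in invoking the identity \eqref{identity} to match \eqref{Cor:01} with \eqref{phiflat} and in flagging that the first clause of (a) only serves the positivity of $\phi$, but these are elaborations of the same argument rather than a different approach.
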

\begin{proof}		In Theorem \ref{prop:family} we considered $g_2(z)=g_3(z)=0$ (the result is similar if we consider $ g_3(z)=0 $ uniquely).   By  $ (a) $,  $\phi_{zz}=g_1^{\prime\prime}(z)>0,\;\forall z\in\mathbb{R}.$ Consequently, from $ (a), (b), $ \eqref{cond1tobefinsler} and \eqref{cond2tobefinsler},
	\begin{align*}
		\Omega=&k+g_1(z)-zg_1^\prime(z)+\dfrac{1}{2}\displaystyle \int^{r^2-s^2}_0g_6(\xi)d\xi>0,\;\forall z\in\mathbb{R},\;r\geq|s|;\\
		 \Lambda=&\Omega \phi_{zz}+(r^2-s^2)g_6(r^2-s^2)\phi_{zz}>0,\;\forall z\in\mathbb{R},\;r\geq|s|.
	\end{align*}
\end{proof}
Observe that if $ k>0 $ and  $ g_6(\xi)\geq 0, \forall \xi\geq 0$ then, the condition $ (b), $ in Corollary \ref{corollary01}, is satisfied.

\section{Examples}\label{exemplesfinal}
\begin{example}
	In Corollary \ref{corollary01}, by choosing
	 $g_1(z)=\sqrt{z^2+\varepsilon} + \gamma z$ ($\varepsilon>0, \vert \gamma\vert <1$) (see Example 2 in \cite{Solorzano2022}), $g_6(\xi)=\frac{2-\mu(1+(1+\mu)\xi)}{(1+\mu\xi)^{5/2}}$ (see Corollary 6.2 in \cite{MoZoTe2013}) and $ g_4(z)=0 $, then, the following  warped type Finsler metric
	 \begin{align*}
	 	F(x,y)= {{\sqrt{(y^0)^2+ \varepsilon\vert\overline{y}\vert^2} }} + \gamma y^0 + sg_5(\vert\overline{x}\vert) + \frac{\left[1+(1+\mu)\vert\overline{x}\vert^2\right]\left[\vert\overline{y}\vert^2\vert\overline{x}\vert^2-\langle\overline{x},\overline{y}\rangle^2\right] + \langle\overline{x},\overline{y}\rangle^2}{(1+\mu\vert\overline{x}\vert^2)\sqrt{\vert\overline{y}\vert^2 +\mu(\vert\overline{x}\vert^2\vert\overline{y}\vert^2-\langle\overline{x},\overline{y}\rangle)}},
	 \end{align*} 
	 is locally projectively flat. Here, $ g_5(\vert \overline{x}\vert) $ is such that $ \phi>0, $ for example $ g_5(r)=\frac{2\sqrt{1+(1+\mu)r^2}}{(1+\mu r^2)^2}. $ 
\end{example}
\begin{example}
	In Corollary \ref{corollary01}, by choosing
$g_1(z)=\sqrt{z^2+\varepsilon} $ ($\varepsilon>0$), $g_6(\xi)=2\xi^{m}$, $ m=0,1,... $ (see similar form in Theorem 1.1 in \cite{HM1}) and $ g_4(z)=0 $, we have that
\begin{align}
	\phi=k+\sqrt{z^2+\varepsilon} + sg_5(r) + \frac{(r^2-s^2)^{m+1}}{m+1}+\frac{2s}{1+2m}I_m(r,s),
\end{align}
where $ I_m=x(r^2-s^2)^m+2mr^2I_{m-1}, $ with  $ I_0(r,s)=s. $
 Then, the following warped type Finsler metric on $ \mathbb{R}\times\mathbb{B}^n, $
 \[F(x,y):=\vert\overline{y}\vert \phi \left(\frac{y^0}{\vert\overline{y}\vert},\vert\overline{x}\vert,\frac{\langle\overline{x},\overline{y}\rangle}{\vert\overline{y}\vert}\right)\]
 is locally projectively flat.
\end{example}


%

\subsection*{Acknowledgements} The first author would like to thank Prof. Keti Tenenblat for helpful conversations during her postdoctoral project at the Universidade de Brasilia (Brazil). Her suggestions were invaluable.


\end{document}